\newtheorem{theorem}{Theorem}
\newtheorem{definition}{Definition}
\newtheorem{prop}{Proposition}
\newtheorem{lemma}{Lemma}
\newtheorem{example}{Example}
\author{Fatmanur Gursoy$^1$,  Elif Segah Oztas$^2$, Bahattin Yildiz$^3$ \\
	\small{$^1$Istanbul, Turkey}\\
	\small{$^2$Department of Mathematics, Karamanoglu Mehmetbey  University, Turkey}\\  
	\small {$^3$ Department of Math \& Stat, Northern Arizona University, Flagstaff, AZ, 86001, USA} \\
	\small{(fnurgursoy@hotmail.com, elifsegahoztas@gmail.com, bahattin.yildiz@nau.edu
		)}}
\begin{document}
\title{Reversible DNA codes over a family of non-chain rings $R_{k,s}$}
\maketitle
\begin{abstract}

In this paper, we solve the reversibility problem for DNA codes  over the non-chain ring $R_{k,s}=\mathbb{F}_{4^{2k}}[u_1,\ldots,u_{s}]/\langle u_1^2-u_1,\ldots, u_s^2-u_s\rangle$. We define an automorphism $\theta$ over $R_{k,s}$ which help us both find the idempotent decomposition of $R_{k,s}$ and solve the reversibility problem via skew cyclic codes. Moreover, we introduce a  generalized Gray map that preserves DNA reversibility.
\\
\textit{Keywords:} Skew Cyclic Codes, DNA Codes, Gray Map, Reversible Codes, DNA $k$-bases.\\
\textit{2010 Mathematics Subject Classification:} 94B15, 94B05,  92D20.
\end{abstract}
\section{Introduction}

The DNA consists of four bases (nucleotides), namely Adenine (A), Guanine (G), Cytosine (C) and Thymine (T). Those bases are lined up by the Watson-Crick complement (WCC) rule. WCC ensures that A pairs with T ($A^c=T$, $T^c=A$) and C pairs with G ($C^c=G$, $G^c=C$). 
Adleman, in \cite{adleman94}, used the structure of the DNA to solve a seven-node instance of the Hamiltonian Graph problem, an NP-complete problem similar to the travelling salesman problem. Since then, using DNA in other computational problems has become a focal point of research in many platforms. As further applications of this idea, for example in \cite{adleman99} and \cite{boneh}, the researchers used DNA to break the Data Encryption System. 

The DNA has also attracted the attention of researchers in working in Coding Theory, as it acts very much like an error-correcting code, when replicating. The work in DNA coding has been done using different approaches and tools-alphabets, such as DNA single basis (\cite{tehergf4, k1,tahersiapsim, siaptaherr}), double bases (\cite{agul,k2, ise, yildizsiap}), and multiple bases (\cite{k3,alger,  Gse, novel,kmer}) over various fields and recently, rings with complex structures. DNA codes have been studied in the context of  different constraints such as the Hamming distance constraint or the GC content constraint. In the works where multiple bases are matched with elements of an algebraic structure, the problem of reversibility arises, and this has been solved in the literature by using different methods.

The reversibility problem while working with algebraic structures that have more than four elements can be explained by the following example: Let $R_{1,1}$ be the ring $\mathbb{F}_{4^{2}}[u_1]/\langle u_1^2-u_1\rangle$ which has $16^2=4^4$ elements. To make a correspondence between the elements of $R_{1,1}$ and DNA multiple bases, we need to map each element of the ring to a DNA 4-bases. Let $(\alpha_1,\alpha_2,\alpha_3)$ be a codeword over $R_{1,1}$ corresponding to ATTCGAACTCCG (a $12$-string)
where $\alpha_1 \rightarrow $ATTC, $\alpha_2 \rightarrow $GAAC, $\alpha_3 \rightarrow  $TCCG and $\alpha_1 ,\alpha_2 ,\alpha_3 \in R_{1,1}.$  The reverse of $(\alpha_1,\alpha_2,\alpha_3)$ is $(\alpha_3,\alpha_2,\alpha_1)$, and $(\alpha_3,\alpha_2,\alpha_1)$
corresponds to TCCGGAACATTC. However, TCCGGAACATTC is not the reverse of ATTCGAACTCCG. Indeed, the reverse of ATTCGAACTCCG is GCCTCAAGCTTA. This deomnstrates that reversing the entries in the codeword does not lead directly to the reverse of the DNA sequence if we are working on an alphabet that has more than four elements. 

In this work, we focus on extending the problem introduced in \cite{dnahalka} and study  reversible DNA codes over $R_{k,s}=\mathbb{F}_{4^{2k}}[u_1,\ldots,u_{s}]/\langle u_1^2-u_1,\ldots, u_s^2-u_s\rangle$ where $u_iu_j=u_ju_i$.  To make a correspondence between the DNA sequences and the ring $R_{k,s}$, we map each ring element to a DNA $2^{s+1}k$-bases ($2^{s+1}k$-mer). We establish a generalized Gray map that preserves DNA reversibility. Moreover, we obtain good examples of reversible DNA codes from skew cyclic codes over $R_{k,s}$.

One motivation for considering a multi-base to element approach that we use in this work can be explained as follows. Recent studies have shown that components related to DNA, such as transcription factor, binding sites, genes, proteins etc.,  are composed of $k$-bases, i.e., they can be decomposed into $k$-base components that have meaningful interactions as opposed to single bases. Some of the recent research on the DNA has been focusing on finding the role of the specific $k$-bases. For example, in \cite{alper}, the DNA 8-bases (8-mers) are listed  from special areas including binding sites that have an important role in opening/closing the genes for producing proteins. In \cite{alper} (Table 6), the frequencies of 8-mers are listed and it has been demonstrated that the top area of the lists are important to identify binding sites where reversible-complement DNA 8-mers have been shown to appear intensively. Moreover, finding meaningful paths in DNA by using algebraic codes would potentially open up interesting research venues in understanding DNA and its related areas. 

The rest of the work is organized as follows. In Section 2, we give some of the preliminaries about the DNA and codes over a special family of rings. In Section 3, we describe a decomposition for the ring family $R_{k,s}$ to better understand its structure. In Section 4, we consider reversible DNA codes over $R_{k,s}$ together with some good examples of DNA codes obtained in this way. We finish with some concluding remarks and possible directions for future research. 

 \section{Preliminaries and definitions}
 In this section we  give  basic definition of skew cyclic codes and reversible codes and we  introduce skew cyclic codes over $R_{k,s}$. 
 Skew cyclic codes were first introduced by Boucher et al. in \cite{skew cyclic}. They  generalized  cyclic codes by using skew polynomial rings with an automorphism $\theta$ over the finite field with $q$ elements ($\mathbb{F}_q$). The definition of skew polynomial rings is given below.
 \begin{definition}\cite{Jacobson}
 	Let $R$ be a commutative ring with identity and $\theta$ be an automorphism over $R$. The set of polynomials $R[x;\theta]=\{a_0+a_1x+...+a_{n-1}x^{n-1}|a_i\in R, n\in \mathbb{N}\}$ is called the skew polynomial ring over $R$ where addition is the usual addition of polynomials and the multiplication is defined by $xa=\theta(a)x$ $(a\in R)$ and extended to polynomial multiplication naturally.
 \end{definition} 
 A skew cyclic code is defined to be a  linear code $C$ of length $n$ over $\mathbb{F}_q$ satisfying the property $\sigma(c)=(\theta(c_{n-1}),\theta(c_0),...,\theta(c_{n-2}))\in C$, for all $c=(c_0,c_1,...,c_{n-1})\in C$ \cite{skew cyclic}. In skew polynomial representation of  $C$, we can consider a codeword $c=(c_0,c_1,...,c_{n-1})$ of $C$ as a polynomial $c(x)=c_0+c_1x+\ldots+c_{n-1}x^{n-1}\in \mathbb{F}_q[x;\theta]$. In this case $\sigma(c)$
 corresponds to the polynomial $xc(x)$ in $\mathbb{F}_q[x;\theta]$. If the order of $\theta$, say $m$, divides $n$ then $C$ corresponds to a left ideal of the quotient ring $\mathbb{F}_q[x;\theta]/(x^n-1)$ (in \cite{skew cyclic}), else  $\mathbb{F}_q[x;\theta]/(x^n-1)$ is not a ring anymore but we can consider $C$ as a left $\mathbb{F}_q[x;\theta]-$submodule of $\mathbb{F}_q[x;\theta]/(x^n-1)$ (in \cite{I.siap}). In both cases $C$ is principally generated by a monic polynomial $g(x)$ which is a right divisor of  $x^n-1$ in $\mathbb{F}_q[x;\theta]$. One of the attractive  features of studying skew cyclic codes is that, factorization of $x^n-1$ is not unique in $\mathbb{F}_q[x;\theta]$, so we have much more generating polynomials in contrast with cyclic codes. 
 Thanks to their rich algebraic structure,  many new good codes with parameters exceeding the parameters of the previously best known linear codes have been obtained in both \cite{skew cyclic} and \cite{qc}.   There are many other studies done by considering skew polynomial rings over different rings instead of finite fields, such as skew cyclic codes over Galois rings (\cite{galois}),  finite chain rings (\cite{FCR}), non-chain rings such as   $\mathbb{F}_q+v\mathbb{F}_q$ (\cite{fq+vfq}) and $\mathbb{F}_q+u\mathbb{F}_q+v\mathbb{F}_q+uv\mathbb{F}_q$ (\cite{fquv}).

  In this paper, we study a family of rings 
  $R_{k,s}=\mathbb{F}_{4^{2k}}[u_1,\ldots,u_{s}]/\langle u_1^2-u_1,\ldots, u_s^2-u_s\rangle$, where $k,s\geqslant 1$ and $u_iu_j=u_ju_i$.  $R_{k,s}$ is a commutative  non-chain ring with characteristic $2$ and cardinality $4^{2k2^s}=4^{2^{s+1}k}$.

 We  use the following  automorphism to define skew cyclic codes over $R_{k,s}$. Let
 \begin{align*}
 \theta:&R_{k,s}\longrightarrow R_{k,s}\\
 &a\longrightarrow a^{4^k} \,\, \forall a\in \mathbb{F}_{4^{2k}},\\
 &u_i\longrightarrow u_i+1 \,\, \forall i\in\{1,\ldots,s\}.
 \end{align*}
Then $\theta$ is an automorphism over $R_{k,s}$  of order $2$. 
Naturally $R_{k,s}[x;\theta]$  is a skew polynomial ring and a skew cyclic code $C$ of length $n$ over $R_{k,s}$ corresponds to a left $R_{k,s}[x;\theta]$-submodule of $R_{k,s}[x;\theta]/(x^n-1)$. Whenever $g(x)$ is a right divisor of $x^n-1$ in $R_{k,s}$, the left $R_{k,s}[x;\theta]$-submodule  $C=(g(x))$ is a skew cyclic code over $R_{k,s}$. But not all skew cyclic codes over $R_{k,s}$ are principally generated.  In this study we focus on only principally generated ones. The factorization of $x^n-1$ is not unique in $R_{k,s}[x;\theta]$, since $\mathbb{F}_{4^{2k}}[x;\theta]$ is a subring of  $R_{k,s}[x;\theta]$ and the factorization of $x^n-1$ is not unique in  $\mathbb{F}_{4^{2k}}[x;\theta]$. So, there are many different right divisors of $x^n-1$. This algebraic property provides us an advantage for finding  right divisors   with special properties (palindromic and $\theta-$palindromic) in Section \ref{sec4}.
 \begin{definition}
 	Let $C$ be a code of length $n$ over $\mathbb{F}_q$. If $c^r=(c_{n-1},c_{n-2},...,c_1,c_0)\in C$ for all $c=(c_0,c_1,...,c_{n-1})\in C$, then $C$ is called a reversible code.
 \end{definition}

\section{Decomposition of $R_{k,s}$}
 In this section, we introduce a method to find idempotent decomposition of the ring $R_{k,s}$. By using the decomposition of $R_{k,s}$ we define a Gray map which preserves DNA reversibility.
 
  Our aim is to establish a correspondence between the elements of $R_{k,s}$ and DNA $2^{s+1}k$-bases in such a way that the reversibility problem is solved by using skew cyclic codes over $R_{k,s}$. First of all we need to express the elements of $R_{k,s}$ in a unique way.
  Let us  arrange the elements of the form $u_1^{r_1}u_2^{r_2}\ldots u_s^{r_s}$ in an order. Let $A$ be the set of such elements, i.e. $A=\{u_1^{r_1}u_2^{r_2}\ldots u_s^{r_s}|r_i\in\{0,1\}, 1\leq i\leq s\}.$  The cardinality of $A$ is $2^s.$ Define a map
 \begin{align*}
 \mu: A&\longrightarrow \mathbb{Z}^s\\
 u_1^{r_1}u_2^{r_2}\ldots u_s^{r_s}&\longrightarrow  (r_1,r_2,\ldots,r_s).
 \end{align*}
 There are several ways of ordering  $ s $-tuples $(r_1,r_2,\ldots,r_s)$. We will use the lexicographic order.

 \begin{definition}[\cite{lex}]
 	Let $\alpha=(\alpha_1,\alpha_2,\ldots,\alpha_s)$ and $\beta=(\beta_1,\beta_2,\ldots,\beta_s)$ be in
 	$\mathbb{Z}_{\geqslant 0}^s$. If the leftmost nonzero entry of the vector difference $\alpha-\beta\in \mathbb{Z}^s$ is positive then $\alpha$ is greater than $\beta$ with respect to the lexicographic order, and denote it  $\alpha>_{lex}\beta$. We  write $u_1^{\alpha_1}u_2^{\alpha_2}\ldots u_s^{\alpha_s}>_{lex}u_1^{\beta_1}u_2^{\beta_2}\ldots u_s^{\beta_s}$ if $\alpha>_{lex}\beta$.
 \end{definition}
 For example; let us put $u_1,u_2\in R_{1,3}$ in order. Since  $\mu(u_1)=(1,0,0)$,   $\mu(u_2)=(0,1,0)$ and $(1,0,0)-(0,1,0)=(1,-1,0)$ then $u_1>_{lex}u_2$.


 Here, we rename the elements of $R_{k,s}$ of the form $u_1^{r_1}u_2^{r_2}\ldots u_s^{r_s}$ where $r_i\in{0,1}$ and $1\leq i\leq s$, as follows:

\begin{enumerate}
	\item First consider the following specific subsets of A;
	$$A_j=\{u_1^{r_1}u_2^{r_2}\ldots u_s^{r_s}|\text{exactly }j \text{ of $r_i$'s equal $1$ the others $0$}\},$$
	where $0\leq j \leq s.$ So, $A_0=\{1\}, \, A_1=\{u_1,u_2,\ldots,u_s\}, A_2=\{u_1u_2,u_2u_3\ldots,u_{s-1}u_s\}, A_s=\{u_1u_2\ldots u_s\},$ etc.
	\item Arrange the elements of $A_j$ with respect to the lexicographic order, and let $B_{j}$ be the  array consisting the ordered elements of $A_j$. For example $B_0=[1],$ $B_1=[u_1,\ldots, u_s],$ etc.
	\item Then 
	concatenate these arrays  as $B=[B_0,B_1,\ldots,B_s]$.
	\item Rename the $i$th element of the array $B$ as $U_i.$ For instance, $U_0=1,\ U_1=u_1,\  \ldots, \ U_{2^s-1}=u_1u_2\ldots u_s$.	
\end{enumerate}

Define the ordered $s$-tuples of $R_{k,s}$ as 
\begin{equation}
T=[\mu(U_0),\mu(U_1),\ldots,\mu(U_{2^s-1})]=[T_0,T_1,\ldots,T_{2^s-1}].
\end{equation}
Note that; $\mu(U_i)+\mu(U_{2^s-1-i})=T_i+T_{2^s-1-i}=(1,1,\ldots,1)$ ,  for $0\leq i\leq2^s-1. $

\begin{example}\label{ex1}
	Let us consider the ring $R_{1,3}=\mathbb{F}_{16}[u_1,u_2,u_3]/\langle u_1^2-u_1,u_2^2-u_2, u_3^2-u_3\rangle.$
	Then $A=\{1,u_1,u_2,u_3,u_2u_3,u_1u_3,u_1u_2,u_1u_2u_3\}$ and
	\begin{align*}
	B_0=[1], \ \ \ B_1=[u_1,u_2,u_3], \\
	B_2=[u_1u_2,u_1u_3,u_2u_3], \ \ \ B_3=[u_1u_2u_3].
	\end{align*}
	So, $B=[1,u_1,u_2,u_3,u_1u_2,u_1u_3,u_2u_3,u_1u_2u_3]=[U_0,U_1,\ldots, U_7]$
	and the ordered $3-tuples$ of $R_{1,3} $ is; 
	$T=[T_0,\ldots,T_7]=[(0,0,0),(1,0,0),(0,1,0),(0,0,1),(1,1,0),(1,0,1),(0,1,1),(1,1,1)].$
	Any element $\alpha\in R_{1,3}$ can be written uniquely of the form $\alpha=a_0U_0+a_1U_1+\ldots a_7U_7$ where $\alpha_i\in \mathbb{F}_{16}$. Moreover, it can be easily seen that $T_i+T_{7-i}=(1,1,1)$, for example;
	\begin{align*}
	T_0+T_7=\mu(U_0)+\mu(U_7)=\mu(1)+\mu(u_1u_2u_3)=(0,0,0)+(1,1,1)=(1,1,1),\\
	T_3+T_4=\mu(U_3)+\mu(U_4)=\mu(u_3)+\mu(u_1u_2)=(0,0,1)+(1,1,0)=(1,1,1).
	\end{align*}
\end{example}

The following theorem is a result of the Chinese Remainder Theorem; 

\begin{theorem}\label{CRT}
	Let $I=\{I_i\}$ be a set of $2^s$ elements of $R_{k,s}$, where $0\leq i \leq 2^s-1$. If
	\begin{itemize}
		\item[(i)] $I_i^2=I_i,$
		\item[(ii)] $I_iI_j=0$ for $i\neq j$ and $0\leq i,j \leq 2^s-1$,
		\item[(iii)] $\sum_{i=0}^{2^s-1} I_i=1,$
	\end{itemize}
	then $R_{k,s}=I_0\mathbb{F}_{4^{2k}}\oplus I_1\mathbb{F}_{4^{2k}}\oplus\ldots \oplus I_{2^s-1}\mathbb{F}_{4^{2k}}.$
\end{theorem}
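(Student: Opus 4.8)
The plan is to exploit the fact that $R_{k,s}$ is a finite-dimensional algebra over the field $F:=F_{4^{2k}}$ and to show that the prescribed idempotents form an $F$-basis of this algebra. Since every element of $R_{k,s}$ is written uniquely as $\alpha=a_0U_0+\dots+a_{2^s-1}U_{2^s-1}$ with $a_i\in F$, the monomials $\{U_0,\dots,U_{2^s-1}\}$ constitute an $F$-basis, so $\dim_F R_{k,s}=2^s$, which is exactly the number of elements in the set $I$. It therefore suffices to prove that $\{I_0,\dots,I_{2^s-1}\}$ is $F$-linearly independent: a linearly independent family of $2^s$ vectors in a $2^s$-dimensional space is automatically a basis, and once $\{I_i\}$ is a basis every $\alpha\in R_{k,s}$ has a unique expression $\alpha=\sum_i a_iI_i$ with $a_i\in F$. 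By commutativity $a_iI_i=I_ia_i\in I_iF_{4^{2k}}$, so this is precisely the asserted decomposition $R_{k,s}=\bigoplus_i I_iF_{4^{2k}}$.

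The heart of the argument is thus the \emph{linear independence}, and this is where conditions (i) and (ii) enter. First I would note that each $I_i\neq0$ (a vanishing idempotent would spoil the count, so one should take the $2^s$ elements of $I$ to be the genuine, distinct, nontrivial idempotents attached to $R_{k,s}$). Then suppose $\sum_{i=0}^{2^s-1}a_iI_i=0$ with $a_i\in F$. Multiplying this relation by $I_j$ and using orthogonality (ii), $I_iI_j=0$ for $i\neq j$, together with idempotency (i), $I_j^2=I_j$, annihilates every term except the $j$th and leaves $a_jI_j=0$. Since $I_j\neq0$ and $F$ is a field, $a_jI_j=0$ forces $a_j=0$ (otherwise $I_j=a_j^{-1}(a_jI_j)=0$). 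As $j$ was arbitrary, all coefficients vanish, which establishes linear independence and hence that $\{I_i\}$ is an $F$-basis.

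Finally I would assemble the pieces. Linear independence plus the dimension count gives $R_{k,s}=\bigoplus_i I_iF$ as $F$-spaces. To match the Chinese Remainder Theorem form one checks that each summand is in fact the ideal $I_iR_{k,s}$: using the basis expansion and (i)–(ii), $I_iR_{k,s}=I_i\bigl(\bigoplus_j FI_j\bigr)=\bigoplus_j FI_iI_j=FI_i=I_iF$, so the $F$-subspace $I_iF$ coincides with the ideal generated by $I_i$, and condition (iii), $\sum_i I_i=1$, guarantees these ideals exhaust $R_{k,s}$ with $I_i$ serving as the identity of the $i$th component. The main obstacle is the linear-independence step; everything else is bookkeeping with the vector-space dimension. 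The one delicate point I would be careful to flag in the write-up is that the argument genuinely relies on the idempotents being nonzero, so I would ensure the $2^s$ elements of $I$ are distinct and nontrivial before invoking the dimension count.
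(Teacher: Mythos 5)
Your proof is correct, but it takes a genuinely different route from the paper for the simple reason that the paper gives no proof of this statement at all: it is stated as ``a result of the Chinese Remainder Theorem'' and left there. Your dimension-count argument supplies exactly the content that this citation conceals. The bare CRT for a commutative ring with orthogonal idempotents summing to $1$ only yields $R_{k,s}=\bigoplus_i I_iR_{k,s}$; to sharpen each component $I_iR_{k,s}$ to the one-dimensional subspace $I_iF_{4^{2k}}$ claimed in the theorem, one needs precisely your observation that $\dim_{F_{4^{2k}}}R_{k,s}=2^s$ (via the basis $\{U_0,\ldots,U_{2^s-1}\}$) equals the number of idempotents, so that $2^s$ linearly independent idempotents must form a basis and each component must be exactly one-dimensional. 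Your caveat about nonzero idempotents is also well taken, and on this point your write-up is more careful than the paper's statement: as literally written the theorem is false, since for instance $I=\{0,1\}\subset R_{k,1}$ satisfies (i)--(iii) but $0\cdot F_{4^{2k}}\oplus 1\cdot F_{4^{2k}}=F_{4^{2k}}\subsetneq R_{k,1}$. The hypothesis that every $I_i\neq 0$ must be added, and with it your multiplication-by-$I_j$ argument for linear independence goes through; the idempotents actually constructed in Theorem \ref{decom} are all nonzero, so the paper's application is unaffected. A small further dividend of your approach: once the $I_i$ are assumed nonzero, orthogonal, and $2^s$ in number, hypothesis (iii) becomes redundant --- writing $1=\sum_i a_iI_i$ in the basis and multiplying by $I_j$ forces $a_j=1$, so $\sum_i I_i=1$ holds automatically.
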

	
Now we  construct an idempotent set $I$ satisfying these three conditions above. We associate some elements of $R_{k,s}$ with the s-tuple $T_i$'s as follows.
Let $T_i=(r_1,r_2,\ldots,r_s)$. Then define $ I_i=\theta^{r_1}(u_1)\theta^{r_2}(u_2)\ldots\theta^{r_s}(u_s)$. For example; since $T_0=(0,0,0,\ldots,0)$, $T_1=(1,0,0,\ldots,0)$ and $T_2=(0,1,0,\ldots,0)$ we have $I_0=u_1u_2\ldots u_s$, $I_1=(u_1+1)u_2\ldots u_s$ and $I_2=u_1(u_2+1)u_3\ldots u_s$.

\begin{theorem}\label{decom}
	Let $I=\{I_i=\theta^{r_1}(u_1)\theta^{r_2}(u_2)\ldots\theta^{r_s}(u_s)|\, T_i=(r_1,r_2,\ldots,r_s), 0\leq i \leq 2^s-1\}$. Then $R_{k,s}=I_0\mathbb{F}_{4^{2k}}\oplus I_1\mathbb{F}_{4^{2k}}\oplus\ldots \oplus I_{2^s-1}\mathbb{F}_{4^{2k}}.$
\end{theorem}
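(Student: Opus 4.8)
The plan is to invoke Theorem \ref{CRT}: once the explicit set $I$ is shown to satisfy the three conditions (i)--(iii) stated there, the claimed direct-sum decomposition follows at once. So the whole task reduces to verifying idempotency, orthogonality, and completeness for the family $I_i=\te^{r_1}(u_1)\te^{r_2}(u_2)\cdots\te^{r_s}(u_s)$, where $T_i=(r_1,\ldots,r_s)$.

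Everything rests on one elementary computation in characteristic $2$. Since $u_\ell^2=u_\ell$ and $\te(u_\ell)=u_\ell+1$, both $u_\ell$ and $u_\ell+1$ are idempotent, because $(u_\ell+1)^2=u_\ell^2+1=u_\ell+1$; they are orthogonal, since $u_\ell(u_\ell+1)=u_\ell^2+u_\ell=0$; and they are complementary, since $u_\ell+(u_\ell+1)=1$. Writing $e_{\ell,0}:=u_\ell$ and $e_{\ell,1}:=\te(u_\ell)=u_\ell+1$, I can express each generator of $I$ as the product $I_i=\prod_{\ell=1}^{s}e_{\ell,r_\ell}$ over commuting factors, where I use the hypothesis $u_iu_j=u_ju_i$.

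With this notation the first two conditions become short. For (i), commutativity lets me regroup $I_i^2=\prod_{\ell}e_{\ell,r_\ell}^2=\prod_{\ell}e_{\ell,r_\ell}=I_i$. For (ii), when $i\neq j$ the tuples $T_i$ and $T_j$ differ in at least one coordinate $\ell$, where $\{e_{\ell,r_\ell},e_{\ell,r'_\ell}\}=\{u_\ell,u_\ell+1\}$; regrouping $I_iI_j=\prod_{\ell}e_{\ell,r_\ell}e_{\ell,r'_\ell}$ then exhibits the vanishing factor $u_\ell(u_\ell+1)=0$, so $I_iI_j=0$.

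For (iii) --- which I expect to be the one genuinely requiring attention --- I would first recall that the construction of the array $B=[B_0,\ldots,B_s]$ lists each monomial $u_1^{r_1}\cdots u_s^{r_s}$ exactly once, so the tuples $T_0,\ldots,T_{2^s-1}$ run over all of $\{0,1\}^s$ without repetition. Consequently $\sum_i I_i$ equals $\sum_{(r_1,\ldots,r_s)\in\{0,1\}^s}\prod_{\ell}e_{\ell,r_\ell}$, and the distributive law factors this as
\[
\sum_{(r_1,\ldots,r_s)\in\{0,1\}^s}\prod_{\ell=1}^{s}e_{\ell,r_\ell}\;=\;\prod_{\ell=1}^{s}\bigl(e_{\ell,0}+e_{\ell,1}\bigr)\;=\;\prod_{\ell=1}^{s}\bigl(u_\ell+(u_\ell+1)\bigr)\;=\;\prod_{\ell=1}^{s}1\;=\;1.
\]
The only real care needed is justifying this factorization --- that expanding $\prod_{\ell}(e_{\ell,0}+e_{\ell,1})$ produces precisely one term for each binary tuple --- together with the bookkeeping that $\{T_i\}$ is indeed the full index set $\{0,1\}^s$. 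Once (i)--(iii) are in hand, Theorem \ref{CRT} delivers the decomposition.
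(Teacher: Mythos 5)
Your proposal is correct and has the same overall skeleton as the paper's proof: both reduce the statement to verifying conditions (i)--(iii) of Theorem \ref{CRT}, and your checks of idempotency and orthogonality are essentially identical to the paper's (square each commuting factor using $u_\ell^2=u_\ell$ and $(u_\ell+1)^2=u_\ell+1$; for $i\neq j$ locate a coordinate where the tuples differ and exhibit the vanishing factor $u_\ell(u_\ell+1)=0$). The one genuine difference is condition (iii). You evaluate the sum in closed form via the distributive identity
\[
\sum_{(r_1,\ldots,r_s)\in\{0,1\}^s}\ \prod_{\ell=1}^{s}e_{\ell,r_\ell}
=\prod_{\ell=1}^{s}\bigl(u_\ell+(u_\ell+1)\bigr)=1,
\]
whereas the paper argues by induction on $s$: it partitions the idempotent set into those elements having $u_s$ as a factor and those having $u_s+1$, strips that factor to land in the idempotent set of $R_{k,s-1}$, and applies the inductive hypothesis, so that $\sum_i I_i=u_s\cdot 1+(u_s+1)\cdot 1=1$. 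These are really the same computation: the paper's induction step is exactly one ``peeling'' of your product, iterated, and it is precisely the rigorous justification of the expansion step you yourself flag as the only point requiring care. What each buys: your factorization is more compact and makes the combinatorial content transparent (one term per binary tuple, which requires the bookkeeping fact, true by construction of $A$ and $B$, that the $T_i$ exhaust $\{0,1\}^s$ without repetition); the paper's induction trades that transparency for a self-contained argument that never has to justify a multi-factor expansion in one stroke. A fully airtight write-up of your version would simply formalize the displayed identity by the same induction the paper uses.
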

\begin{proof}
	\textbf{(i)} $I_i^2=I_i.$\\	
	Each $I_i$ has the form $(u_{j_1}+1)\ldots(u_{j_d}+1)u_{j_d+1}\ldots u_{j_s}$, where $\{j_1,j_2,\ldots, j_s\}=\{1,2,\ldots, s\}$ and $1\leq d \leq s $.  Since $(u_{j_e}+1)^2=(u_{j_e}+1)$ and $u_{j_f}^2=u_{j_f}$, then $I_i^2=I_i$.
		
		\textbf{(ii)} $I_iI_j=0$ for $i\neq j.$\\
		Let
		\begin{align*}
		I_i=\theta^{r_1}(u_1)\theta^{r_2}(u_2)\ldots\theta^{r_s}(u_s)\rightarrow T_i=(r_1,r_2,\ldots,r_s),\\
		I_j=\theta^{m_1}(u_1)\theta^{m_2}(u_2)\ldots\theta^{m_s}(u_s)\rightarrow T_j=(m_1,m_2,\ldots,m_s).
		\end{align*}
		Since $i\neq j$, then $T_i$ and $T_j$ are distinct so, at least one coordinate $r_e\neq m_e$, where $1\leq e \leq s$. Without loss of generality, we may assume that  $r_e=0$ and $m_e=1.$ Then $\theta^{r_e}(u_e)=u_e$ and $\theta^{m_e}(u_e)=u_e+1$. Since $u_e(u_e+1)=0$, we have $I_iI_j=0$.	
			
	\textbf{(iii)}	$\sum_{i=0}^{2^s-1} I_i=1.$
	
	The proof is by induction.
	For $s=0$, $I_0=1$.
	Suppose the induction hypothesis $\sum_{i=0}^{2^j-1} I_i=1$ is true for all $0<j<l$. Let us look at the case $s=l$.
	Exactly half of the elements of the set $I=\{I_0,I_1,\ldots, I_{2^l-1}\}$ have $u_l$ as a factor and the others have $(u_l+1)$ as another factor.

	If we take the subset of $I$ which has idempotents with the factor $u_l$, and delete $u_l$ from each element then we have the idempotent set of $R_{k,l-1}$, say $\{V_0,V_1,\ldots,V_{2^{l-1}-1}\}$ thus by induction we have 	$\sum_{i=0}^{2^{l-1}-1}V_i=1$.
	
	If we take the subset of $I$ which has idempotents with the factor $(u_l+1)$, and delete $(u_l+1)$ from each element then we have the idempotent set of $R_{k,l-1}$, again.
	
	So,
	\begin{align*}
	\sum_{i=0}^{2^l-1} I_i&=u_l\sum_{i=0}^{2^{l-1}-1}V_i +(u_l+1)\sum_{i=0}^{2^{l-1}-1}V_i\\
	&= u_l+(u_l+1)=1.
	\end{align*}	
\end{proof}
\begin{example}
	Let us determine the idempotent set $I$ for the ring $R_{1,3}$, given in Example \ref{ex1}.
	Since $T=[T_0,\ldots,T_7]=[(0,0,0),(1,0,0),(0,1,0),(0,0,1),(1,1,0),(1,0,1),(0,1,1),(1,1,1)]$, we have
	 \begin{align*}
	&I_0=u_1u_2u_3,\\
	&I_1=(u_1+1)u_2u_3,\\
	&I_2=u_1(u_2+1)u_3,\\
	&I_3=u_1u_2(u_3+1),\\
	&I_4=(u_1+1)(u_2+1)u_3,\\
	&I_5=(u_1+1)u_2(u_3+1),\\
	&I_6=u_1(u_2+1)(u_3+1),\\
	&I_7=(u_1+1)(u_2+1)(u_3+1).				
	\end{align*}	
	
\end{example}
By using the decomposition of $R_{k,s}$ in Theorem \ref{decom}, each element $\alpha\in R_{k,s}$ can uniquely be expressed as $\alpha=\alpha_0I_0+\alpha_1I_1+\ldots+ \alpha_{2^s-1}I_ {2^s-1},$   where $\alpha_0,\alpha_1,\ldots, \alpha_{2^s-1}\in \mathbb{F}_{4^{2k}}.$  By using this expression, we define the following Gray map:
 \begin{align*}\label{Gray}
 \varphi: R_{k,s}&\longrightarrow \mathbb{F}_{4^{2k}}^{2^s}\\
 \alpha&\longrightarrow (\alpha_0,\alpha_1,\ldots, \alpha_{2^s-1}).
 \end{align*}

 This Gray map is a one-to-one and onto map and can naturally be extended to $n$-tuples coordinatewise. The following propositon allows us to determine the Gray image of an element of the form  $\alpha=b_0U_0+b_1U_1+\ldots+b_{2^s-1}U_{2^s-1}$. The dot product  ``$\cdot$" below denotes the componentwise multiplication of two vectors, for example, $(0,1,0)\cdot (1,0,0)=(0,0,0).$
 \begin{prop}
 	Let  $\alpha=b_0U_0+b_1U_1+\ldots+b_{2^s-1}U_{2^s-1} \, \in R_{k,s}$ where $b_i\in \mathbb{F}_{4^{2k}}$ for $0\leq i \leq 2^s-1$. Then $\varphi(\alpha)=(\alpha_0,\alpha_1,\ldots,\alpha_{2^s-1})$ and
 	$$\alpha_i=\sum_j b_j \text{ where } 0\leq j\leq 2^{s}-1 \text{ and }j \text{ satisfies }T_i\cdot T_j=(0,0,\ldots,0).$$
 \end{prop}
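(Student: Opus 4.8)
The plan is to exploit the $F_{4^{2k}}$-linearity of $\varphi$ together with the orthogonality relations of the idempotents. Since $\varphi$ is defined coefficientwise on the decomposition $R_{k,s}=I_0F_{4^{2k}}\oplus\cdots\oplus I_{2^s-1}F_{4^{2k}}$, it is $F_{4^{2k}}$-linear, so $\varphi(\al)=\sum_j b_j\,\varphi(U_j)$ and hence $\al_i=\sum_j b_j\,(\varphi(U_j))_i$. Thus it suffices to prove, for each monomial $U_j$, that its $i$-th Gray coordinate $(\varphi(U_j))_i$ equals $1$ when $T_i\cdot T_j=(0,\ldots,0)$ and $0$ otherwise; equivalently, that $U_j=\sum_{i:\,T_i\cdot T_j=(0,\ldots,0)} I_i$.

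To read off the coefficient of $I_i$ in any element, I would use the orthogonality from Theorem \ref{decom}. Writing $U_j=\sum_i c_i I_i$ with $c_i\in F_{4^{2k}}$ and multiplying by $I_i$, the relations $I_i^2=I_i$ and $I_iI_{i'}=0$ (for $i\neq i'$) give $U_jI_i=c_iI_i$. Hence $c_i=1$ if $U_jI_i=I_i$ and $c_i=0$ if $U_jI_i=0$, so the whole statement reduces to deciding, for each pair $(i,j)$, whether $U_jI_i$ equals $I_i$ or $0$.

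This last product is computed factor by factor. Recall that $U_j=\prod_{e:\,(T_j)_e=1}u_e$ and, by construction, $I_i=\prod_{e:\,(T_i)_e=0}u_e\;\prod_{e:\,(T_i)_e=1}(u_e+1)$. Collecting contributions coordinate by coordinate and using $u_e^2=u_e$, $(u_e+1)^2=u_e+1$, and --- crucially, in characteristic $2$ --- $u_e(u_e+1)=u_e^2+u_e=0$, one sees that a coordinate $e$ with $(T_j)_e=(T_i)_e=1$ forces the factor $u_e(u_e+1)=0$ and annihilates the whole product, whereas if no such coordinate exists every factor reproduces the corresponding factor of $I_i$, giving $U_jI_i=I_i$. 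The condition ``no coordinate has $(T_i)_e=(T_j)_e=1$'' is precisely $T_i\cdot T_j=(0,\ldots,0)$, which yields $U_j=\sum_{i:\,T_i\cdot T_j=(0,\ldots,0)}I_i$ and, by the linearity noted above, the stated formula $\al_i=\sum_{j:\,T_i\cdot T_j=(0,\ldots,0)}b_j$.

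The computation is essentially routine; the only point requiring care is the characteristic-$2$ cancellation $u_e(u_e+1)=0$, which is exactly what makes $U_jI_i$ collapse to either $I_i$ or $0$ with no intermediate scalar appearing, so that every coefficient $c_i$ lands in $\{0,1\}$. I expect no genuine obstacle beyond organizing the per-coordinate case analysis cleanly.
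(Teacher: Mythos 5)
Your proof is correct and follows essentially the same route as the paper's: both multiply by the idempotent $I_i$ and decide, coordinate by coordinate via the characteristic-$2$ relation $u_e(u_e+1)=0$, whether the product $U_jI_i$ survives, arriving at the condition $T_i\cdot T_j=(0,\ldots,0)$. If anything, your write-up is slightly more careful, since you verify $U_jI_i=I_i$ exactly (so each surviving $b_j$ enters $\al_i$ with coefficient $1$), whereas the paper only asserts $U_jI_i\neq 0$ at the corresponding step.
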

 \begin{proof}
 	Let $\varphi(\alpha)=(\alpha_0,\alpha_1,\ldots,\alpha_{2^s-1})$. Then $\alpha=\alpha_0I_0+\alpha_1I_1+\ldots+ \alpha_{2^s-1}I_{2^s-1}$. Multiplying $\alpha$ by $I_i$ we have
 	\begin{align*}
 	I_i\alpha=\alpha_iI_i=I_i(b_0U_0+b_1U_1+\ldots+b_{2^s-1}U_{2^s-1})\\
 	\rightarrow \alpha_iI_i=b_0U_0I_i+b_1U_1I_i+\ldots+b_{2^s-1}U_{2^s-1}I_i
 	\end{align*}
 	since $I_iI_j=0$ for $i\neq j$.
 	Now let us determine $U_jI_i$. Let the ordered s-tuple of $I_i$ be $T_i=(r_1,r_2,\ldots,r_s)$ and the ordered s-tuple of $U_j$ be $T_j=(e_1,e_2,\ldots,e_s)$.
 	If $r_k=e_k=1$ for some $k\in\{1,2,\ldots,s\}$, then  $I_i$ has $(u_k+1)$ as a factor and $U_j$ has $u_k$ then $U_jI_i=0$. This means $b_j$ does not appear in the result of $I_i\alpha$. Otherwise, if either $r_k=e_k=0$ or $r_k\neq e_k$ for all $k\in\{1,2,\ldots, s\}$ then $U_jI_i\neq0$ so $b_j$ appears in the result of $I_i\alpha$.
 Whenever $r_k=e_k=0$ or $r_k\neq e_k$ for all $k\in\{1,2,\ldots, s\}$, $T_iT_j=(0,0,\ldots,0)$. Hence we can conclude that $\alpha_i=\sum b_j \text{ where } 0\leq j\leq 2^{s}-1 \text{ and }j \text{ satisfies }T_i\cdot T_j=(0,0,\ldots,0).$ 	
 \end{proof}

\section{Reversible DNA codes over $R_{k,s}$}\label{sec4}
In this section we obtain reversible DNA codes from skew cyclic codes over $R_{k,s}$ by making use of the Gray map defined in the previous section.

 In \cite{Gse}, Oztas and Siap gave a correspondence between the  elements of the field $\mathbb{F}_{4^{2k}}$ and DNA $2k-$bases. In order to solve the reversibility problem they mapped each $\beta$   and $\beta^{4^k}\in \mathbb{F}_{4^{2k}}$ to the DNA  $2k-$bases that are reverses of each other. They gave an algorithm to do this mapping.  Let us call this map with $\tau$.
 $\tau$ can naturally be extended to a map $\tau_2$ from $\mathbb{F}_{4^{2k}}^{2^s}$ to DNA $2^{s+1}k$- bases as follows;  $\tau_2(\alpha_0,\alpha_1,\ldots, \alpha_{2^s-1})=(\tau(\alpha_0),\tau(\alpha_1),\ldots,\tau(\alpha_{2^s-1}))$, where $\alpha_i\in \mathbb{F}_{4^{2k}}$, $0\leq i\leq 2^s-1$.

 In our study $\beta^{4^k}=\theta(\beta)$ for $\beta\in \mathbb{F}_{4^{2k}}$, thus $\tau(\beta)$ and $\tau(\theta(\beta))$ are reverses of each other. Briefly, we  say that $\beta$ and  $\theta(\beta)$ are DNA-reverses of each other.

 	Let $\alpha\in R_{k,s}$ and  $\varphi(\alpha)=(\alpha_0,\alpha_1,\ldots, \alpha_{2^s-1})$. The corresponding DNA $2^{s+1}k$-bases of $\alpha$ is $\tau_2(\alpha_0,\alpha_1,\ldots, \alpha_{2^s-1})=(\tau(\alpha_0),\tau(\alpha_1),\ldots,\tau(\alpha_{2^s-1}))$ and its reverse is $$(\tau(\theta(\alpha_{2^s-1})),\tau(\theta(\alpha_{2^s-2})),\ldots, \tau(\theta(\alpha_{0}))).$$
In short, we say $(\alpha_0,\alpha_1,\ldots, \alpha_{2^s-1})$ and $(\theta(\alpha_{2^s-1}),\theta(\alpha_{2^s-2}),\ldots,\theta(\alpha_{0}))$ are DNA reverses of each other.


 \begin{lemma} $\theta(I_j)=I_{2^s-1-j}$ for all $j\in\{0,1,\ldots,2^s-1\} .$
 	\end{lemma}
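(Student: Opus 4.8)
The plan is to unpack the definition of $I_j$ in terms of the $s$-tuple $T_j$, apply $\te$ factor-by-factor, and then identify the resulting product as $I_{2^s-1-j}$ by checking that its associated $s$-tuple is $T_{2^s-1-j}$. First I would write $T_j=(r_1,r_2,\ldots,r_s)$ so that by definition $I_j=\te^{r_1}(u_1)\te^{r_2}(u_2)\cdots\te^{r_s}(u_s)$. Since $\te$ is a ring automorphism fixing the field $F_{4^{2k}}$ and sending each $u_i$ to $u_i+1$, it distributes over the product, giving $\te(I_j)=\te^{r_1+1}(u_1)\te^{r_2+1}(u_2)\cdots\te^{r_s+1}(u_s)$.

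The next step is to simplify the exponents. Because $\te$ has order $2$, we have $\te^{r_i+1}=\te^{1-r_i}$ whenever $r_i\in\{0,1\}$; concretely, $\te(u_i)=u_i+1=\te^{1}(u_i)$ when $r_i=0$, and $\te^2(u_i)=u_i=\te^{0}(u_i)$ when $r_i=1$. Hence $\te(I_j)=\te^{1-r_1}(u_1)\te^{1-r_2}(u_2)\cdots\te^{1-r_s}(u_s)$, which is precisely the idempotent associated to the $s$-tuple $(1-r_1,1-r_2,\ldots,1-r_s)$.

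It then remains to recognize this complemented tuple as $T_{2^s-1-j}$. This is exactly the content of the remark following the definition of $T$, namely $T_j+T_{2^s-1-j}=(1,1,\ldots,1)$, which rearranges to $T_{2^s-1-j}=(1-r_1,\ldots,1-r_s)$. Combining the two computations yields $\te(I_j)=I_{2^s-1-j}$, completing the argument.

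I do not anticipate a serious obstacle here, since every ingredient is already in place: the distributivity of the automorphism, its order-$2$ property, and the complementation identity for the tuples. The only point requiring care is the bookkeeping of the exponents modulo $2$ to justify $\te^{r_i+1}=\te^{1-r_i}$ on the two allowed values of $r_i$, and making explicit that the index map $T_j\mapsto(1,\ldots,1)-T_j$ corresponds under our ordering to $j\mapsto 2^s-1-j$. Both are immediate from the structure established above, so the proof should be short.
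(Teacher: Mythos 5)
Your proof is correct and follows essentially the same route as the paper's: apply the automorphism factor-by-factor, use the order-$2$ property to rewrite the exponents as $1-r_i$, and invoke the identity $T_j+T_{2^s-1-j}=(1,\ldots,1)$ to identify the resulting idempotent as $I_{2^s-1-j}$. Your version is only slightly more explicit in justifying $\te^{r_i+1}=\te^{1-r_i}$, which the paper phrases as $\te^{r_i}=\te^{-r_i}$.
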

 	\begin{proof}
 		Let $T_j=(r_1,r_2,\ldots,r_s)$. Since $T_j+T_{2^s-1-j}=(1,1,\ldots,1)$, then $T_{2^s-1-j}=(1-r_1,1-r_2,\ldots,1-r_s).$
 		Since $r_i$'s are either $0$ or $1$ and the order of $\theta$ is $2$, then $\theta^{r_i}=\theta^{-r_i}$. Thus,
 		$$I_j=\theta^{r_1}(u_1)\theta^{r_2}(u_2)\ldots\theta^{r_s}(u_s)\Rightarrow \theta(I_j)=\theta^{1-r_1}(u_1)\theta^{1-r_2}(u_2)\ldots\theta^{1-r_s}(u_s)=I_{2^s-1-j}.$$ 		
 	\end{proof}

Following theorem leads us to solve the reversibility problem directly by skew cyclic codes and it shows that  the Gray map  $\varphi$ preserves the DNA reversibility.

\begin{theorem}
	Let $\alpha\in R_{k,s} $. Then the DNA reverse of $\varphi(\alpha)$ is $\varphi(\theta(\alpha))$.
\end{theorem}	

\begin{proof}
	Let $\alpha=\alpha_0I_0+\alpha_1I_1+\ldots+ \alpha_{2^s-1}I_ {2^s-1}$. Then $\varphi(\alpha)=(\alpha_0,\alpha_1,\ldots, \alpha_{2^s-1})$, and
	 \begin{align*}
	 \theta(\alpha)&=\theta(\alpha_0)\theta(I_0)+\theta(\alpha_1)\theta(I_1)+\ldots+ \theta(\alpha_{2^s-1})\theta(I_ {2^s-1})\\
	&=\theta(\alpha_0)I_{2^s-1}+\theta(\alpha_1)I_{2^s-2}+\ldots+ \theta(\alpha_{2^s-1})I_ 0\\
	&=\theta(\alpha_{2^s-1})I_ 0+\theta(\alpha_{2^s-2})I_1+\ldots+\theta(\alpha_0)I_{2^s-1}.
	 \end{align*}

Thus $\varphi(\theta(\alpha))=(\theta(\alpha_{2^s-1}),\theta(\alpha_{2^s-2}),\ldots,\theta(\alpha_0) )$ which is exactly the DNA reverse of $\varphi(\alpha)$.\end{proof}

\begin{example}
	In \cite{ise}, Table-1 gives a correspondence between the elements of $\mathbb{F}_{16}$ and DNA 2-bases. It has similar properties with the correspondence algorithm given in \cite{Gse}. Table-1 maps each element of $\mathbb{F}_{16}$ and  its fourth power to the DNA 2-bases that are reverses of each other.
	
	 Consider the ring $R_{1,3}$, given in Example \ref{ex1}. Let $\beta$ be a primitive element of $\mathbb{F}_{16}$ and $\alpha=\beta^2I_0+\beta I_1+\beta^5 I_2+ \beta^{3}I_3+I_4+0I_5+\beta^7I_6+I_7 \in R_{1,3}$. Then, $\varphi(\alpha)=(\beta^2,\beta,\beta^5, \beta^3,1,0,\beta^7,1).$ Using Table-1 given in \cite{ise}, the corresponding DNA $16$-bases is 	 	
	 	\begin{align*}
	 	\tau_2(\beta^2,\beta,\beta^5, \beta^3,1,0,\beta^7,1)&=(\tau(\beta^2),\tau(\beta),\tau(\beta^5),\tau(\beta^3),\tau(1),\tau(0), \tau(\beta^7),\tau(1))\\
	 	&=(GC,AT,CC,AG,TT,AA,GT,TT).
	 	\end{align*}	 	
	 	Also,	 	
	 	\begin{align*}
	 	\theta(\alpha)&=I_0+\beta^{13} I_1+0 I_2+ I_3+\beta^{12}I_4+\beta^5I_5+\beta^4I_6+\beta^8I_7 \\
	 	&\Rightarrow \varphi(\theta(\alpha))=(1,\beta^{13},0,1,\beta^{12},\beta^5,\beta^4,\beta^8)\\
	 	&\Rightarrow \tau_2(1,\beta^{13},0,1,\beta^{12},\beta^5,\beta^4,\beta^8)
	 	=(TT,TG,AA,TT,GA,CC,TA,CG).
	 	\end{align*}	 	
	 	Thus, $\varphi(\alpha)$ and $\varphi(\theta(\alpha))$ are DNA reverses of each other.
\end{example}
The Gray map $\varphi$ can be extended to $n-$coordinates. For $c=(c_0,\ldots, c_{n-2}, c_{n-1})\in R_{k,s}^n$ we have $\varphi(c)=(\varphi(c_0),\ldots, \varphi(c_{n-2}), \varphi(c_{n-1}))$. In this case the DNA reverse of $\varphi(c)$ will be $\varphi(\theta(c)^r)=(\varphi(\theta(c_{n-1})),\varphi(\theta(c_{n-2})),\ldots\varphi(\theta(c_0)))$ where $\theta(c)=(\theta(c_0),\ldots, \theta(c_{n-2}), \theta(c_{n-1}))$.  
\begin{definition}
Let $C\subseteq  R_{k,s}^n$. If the DNA reverse of  $\varphi(c)$ exists in $\varphi(C)$, for all $c\in C,$ then $C$ or equivalently
$\varphi(C)$ is called a reversible DNA code.
\end{definition}
\begin{definition}[\cite{alger}]
	Let $f(x)=a_0+a_1x+\ldots +a_tx^t$ be a polynomial of degree $t$ over  $ R_{k,s}$.  $f(x)$ is said to be a palindromic polynomial if $a_i=a_{t-i}$  for all $i\in  \{0,1,\ldots,t \}$.
	And $f(x)$ is said to be a $\theta$-palindromic polynomial if $a_i=\theta(a_{t-i})$  for all $i\in \{0,1,\ldots,t\}$.
\end{definition}

Let $C$ be a skew cyclic code of length $n$ over $\mathbb{F}_q$ with respect to an automorphism $\theta'$. If the order of $\theta'$ and $n$ are relatively prime then $C$ is a cyclic code over $\mathbb{F}_q$ \cite{I.siap}. Similarly, any skew cyclic code of odd length over $R_{k,s}$ with respect to  $\theta$ is a cyclic code, since the order of $\theta$ is $2$. For this reason we restrict the length $n$ to even numbers only.
\begin{theorem}\label{teo1}
	Let $C=( g(x))$ be a skew cyclic code of length $n$  over $R_{k,s}$ where  $g(x)$ is a right divisor of $x^n-1$ in $R_{k,s}[x;\theta]$ and $deg(g(x))$ is odd. If  $g(x)$ is a $\theta$-palindromic polynomial then $\varphi(C)$ is a reversible DNA code.
\end{theorem}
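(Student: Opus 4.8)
The plan is to reduce the statement to a purely algebraic closure property and then exploit the $\te$-palindromic symmetry of the generator. By the $n$-coordinate extension of the preceding theorem, the DNA reverse of $\varphi(c)$ equals $\varphi(\te(c)^r)$, so $\varphi(C)$ is a reversible DNA code exactly when $\te(c)^r\in C$ for every $c\in C$. Writing $\rho(c)=\te(c)^r$ for this ``DNA reverse'' operator on $R_{k,s}^n$ and identifying codewords with polynomials of degree $<n$ in $R_{k,s}[x;\te]/(x^n-1)$, the whole problem becomes: show that the left submodule $C=(g(x))$ is closed under $\rho$, i.e. $\rho(C)\subseteq C$.

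First I would record how $\rho$ interacts with the module operations. A direct coefficient computation gives two intertwining relations: for a scalar $a\in R_{k,s}$ one has $\rho(a\,c)=\te(a)\,\rho(c)$, and for the skew shift $\sigma$ (left multiplication by $x$) one has $\rho(\sigma(c))=\sigma^{-1}(\rho(c))$, that is $\rho(x\,c)=x^{-1}\rho(c)$ with $x^{-1}=x^{n-1}$ modulo $x^n-1$; the cancellations all come from $\te^2=\mathrm{id}$. Combining these with additivity, for any $p(x)=\sum_j a_j x^j$ I obtain $\rho(p(x)\,c)=\hat p(x)\,\rho(c)$, where $\hat p(x)=\sum_j \te(a_j)\,x^{-j}$. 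Since every codeword has the form $c=p(x)\,g(x)$ and $C$ is a left submodule, this reduces the desired closure to the single statement $\rho(g(x))\in C$: indeed $\rho(p\,g)=\hat p\,\rho(g)$ then lies in $C$ automatically.

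It remains to compute $\rho(g)$, and this is where the hypotheses enter. Padding $g$ to length $n$ and reversing, $\rho(g)(x)=\sum_{m=0}^{t}\te(g_{t-m})\,x^{n-1-t+m}$ with $t=\deg g$. The $\te$-palindromic condition $g_i=\te(g_{t-i})$ (hence $\te(g_i)=g_{t-i}$) turns this into $\sum_{m=0}^{t} g_m\,x^{n-1-t+m}$, which I want to recognize as the left multiple $x^{n-1-t}g(x)$. In the skew ring $x^{n-1-t}g(x)=\sum_m \te^{n-1-t}(g_m)\,x^{n-1-t+m}$, so the two agree exactly when $\te^{n-1-t}=\mathrm{id}$, i.e. when $n-1-t$ is even. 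This is guaranteed precisely by the hypotheses: $n$ is even and $t$ is odd, so $n-1-t$ is even. Hence $\rho(g)=x^{n-1-t}g\in C$, and the previous paragraph completes the proof.

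I expect the main obstacle to be the bookkeeping in the intertwining step, specifically establishing $\rho\circ\sigma=\sigma^{-1}\circ\rho$: because the skew shift already carries a $\te$ while $\rho$ carries both a $\te$ and a reversal, one must track the powers of $\te$ through the reduction modulo $x^n-1$ and apply $\te^2=\mathrm{id}$ carefully to see that left multiplication by $x$ turns into left multiplication by $x^{-1}$. The parity observation that $n-1-t$ is even is short but is the real heart of the argument, since it is exactly what forces the spurious automorphism $\te^{n-1-t}$ to vanish and makes $\rho(g)$ a genuine left multiple of $g$; dropping either hypothesis ($n$ even or $\deg g$ odd) breaks this identification.
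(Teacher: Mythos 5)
Your proof is correct and takes essentially the same route as the paper: write each codeword as a left multiple of $g(x)$ and show that the conjugate-reverse operator $\rho(c)=\te(c)^r$ carries left multiples of $g(x)$ to left multiples of $g(x)$, so that $\varphi(C)$ is closed under DNA reversal. The only difference is completeness rather than method: the paper simply asserts that the DNA reverse of $\varphi\bigl(\sum_i \be_i x^i g(x)\bigr)$ is $\varphi\bigl(\sum_i \te(\be_i)x^{t-1-i}g(x)\bigr)$ (deferring the verification to an illustration in \cite{dnahalka}), whereas your intertwining relations $\rho(ac)=\te(a)\rho(c)$, $\rho(xc)=x^{-1}\rho(c)$ together with the identity $\rho(g)=x^{n-1-\deg g}\,g$ --- which is precisely where the $\te$-palindromic condition and the evenness of $n-1-\deg g$ are used --- supply exactly that missing calculation and make explicit where each hypothesis enters.
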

	\begin{proof}
		Let $g(x)$ be a $\theta$-palindromic polynomial and $t=n-deg(g(x))$. Recall that $\varphi(c)$ and $\varphi(\theta(c)^r)$  are DNA reverses of each other.
		Note that, we do not distinguish between the vector representation and the polynomial representation of a codeword in $R_{k,s}^n$. For each $c\in C$, $c(x)=\sum_{i=0} ^{t-1}\beta_i x^i g(x)$ for some $\beta_i\in R_{k,s}$, since $C=(g(x))$. Then the DNA reverse of  $\varphi(c)$ is   $\varphi(c')$ where $c'(x)=\sum_{i=0} ^{t-1} \theta(\beta_i) x^{t-1-i}g(x)$. 
		
		 Since $ c'(x)=\sum_i \theta(\beta_i) x^{t-1-i}g(x)\in C$, then $\varphi(c')\in \varphi(C)$. Hence, $\varphi(C)$ is a  reversible DNA code.
	\end{proof}
\begin{theorem}\label{teo2}
	Let $C=( g(x))$ be a skew cyclic code of length $n$  over $R_{k,s}$ where  $g(x)$ is a right divisor of $x^n-1$ in $R_{k,s}[x;\theta]$ and $deg(g(x))$ is even. If  $g(x)$ is a palindromic polynomial then $\varphi(C)$ is a reversible DNA code.
\end{theorem}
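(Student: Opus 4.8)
The plan is to mirror the argument of Theorem \ref{teo1}, the only genuine change being the parity bookkeeping that now forces the \emph{palindromic} (rather than $\te$-palindromic) hypothesis to do the work. Since we only consider even lengths, $n$ is even; writing $d=\deg(g(x))$ and $t=n-d$, the assumption that $d$ is even makes $t$ even as well, and this parity of $t$ is the decisive ingredient. First I would take an arbitrary $c\in C$ and, using $C=(g(x))$, write it as $c(x)=\sum_{i=0}^{t-1}\be_i x^i g(x)$ with $\be_i\in R_{k,s}$. Recall that the DNA reverse of $\varphi(c)$ is $\varphi(\te(c)^r)$, where concretely, if $c(x)=\sum_{m=0}^{n-1}c_m x^m$ then $\te(c)^r(x)=\sum_{m=0}^{n-1}\te(c_m)x^{n-1-m}$. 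Thus the whole theorem reduces to showing that $\te(c)^r(x)$ is again a left multiple of $g(x)$, because then $\varphi(\te(c)^r)\in\varphi(C)$ for every $c$, which is exactly reversibility.

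The core computation, and the step I expect to be the main obstacle, is to show that
\[
\te(c)^r(x)=\sum_{i=0}^{t-1}\te(\be_i)\,x^{t-1-i}g(x)=:c'(x),
\]
the same expression that appeared in Theorem \ref{teo1}. To reach it I would expand both sides in terms of the coefficients $g_l$ of $g(x)=\sum_{l=0}^{d}g_l x^l$, being careful that multiplication is skew: $x^i g_l=\te^i(g_l)x^i$. Collecting coefficients gives $c_m=\sum_i \be_i\te^i(g_{m-i})$, hence
\[
\te(c)^r(x)=\sum_{i,l}\te(\be_i)\,\te^{i+1}(g_l)\,x^{\,n-1-i-l},
\]
while a direct expansion yields $c'(x)=\sum_{i,l}\te(\be_i)\,\te^{t-1-i}(g_l)\,x^{\,t-1-i+l}$.

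Matching the two monomials of equal degree forces the reindexing $l\mapsto d-l$ in $c'$, at which point the palindromic identity $g_{d-l}=g_l$ may be invoked, and the two $\te$-exponents must then agree modulo $2$ (the order of $\te$). That agreement, $\te^{i+1}(g_l)=\te^{t-1-i}(g_l)$, holds precisely because $i+1\equiv t-1-i \pmod 2$, i.e. because $t$ is even. This is exactly where the even-degree/palindromic hypotheses take over the role played by the odd-degree/$\te$-palindromic ones in Theorem \ref{teo1}: there $t$ was odd and the extra $\te$ built into the $\te$-palindromic condition supplied the missing parity, whereas here the plain palindromic condition suffices because $t$ is already even. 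The delicate point throughout is precisely this parity and reindexing bookkeeping in the displayed coefficient comparison.

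Once the identity $\te(c)^r(x)=c'(x)$ is established, the conclusion is immediate: $c'(x)=\bigl(\sum_{i=0}^{t-1}\te(\be_i)x^{t-1-i}\bigr)g(x)$ is manifestly a left multiple of $g(x)$, so $c'\in C$ and therefore $\varphi(\te(c)^r)=\varphi(c')\in\varphi(C)$. Since $c$ was arbitrary, the DNA reverse of every Gray image lies in $\varphi(C)$, and hence $\varphi(C)$ is a reversible DNA code. Everything apart from the displayed coefficient comparison is the formal skeleton already used for Theorem \ref{teo1}.
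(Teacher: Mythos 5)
Your proposal is correct and follows essentially the same route as the paper: write each $c\in C$ as $c(x)=\sum_{i=0}^{t-1}\be_i x^i g(x)$, identify its DNA reverse with $c'(x)=\sum_{i=0}^{t-1}\te(\be_i)x^{t-1-i}g(x)$, and conclude $c'\in C$, hence $\varphi(C)$ is reversible. The only difference is that the paper asserts the identity $\te(c)^r=c'$ without computation (deferring to an illustration in Remark 1 of the cited earlier work), whereas you verify it explicitly via the coefficient comparison, correctly isolating where the palindromic hypothesis and the parity of $t=n-\deg(g)$ are used.
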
	
\begin{proof}
	Let $g(x)$ be a palindromic polynomial and $t=n-deg(g(x))$. For each $c\in C$, $c(x)=\sum_{i=0} ^{t-1}\beta_i x^i g(x)$ for some $\beta_i\in R_{k,s}$, since $C=(g(x))$. Then the DNA reverse of  $\varphi(c)$ is   $\varphi(c')$ where $c'(x)=\sum_{i=0} ^{t-1} \theta(\beta_i) x^{t-1-i}g(x)$. 
	
	Since $ c'(x)=\sum_i \theta(\beta_i) x^{t-1-i}g(x)\in C$, then $\varphi(c')\in \varphi(C)$. Hence $\varphi(C)$ is a  reversible DNA code.
\end{proof}
In Remark 1 of \cite{dnahalka}, an illustration of proofs has been given for the case $k=1$ and $s=2$, i.e. for the ring $R_{1,2}=\mathbb{F}_{16}+u_1\mathbb{F}_{16}+u_2\mathbb{F}_{16}+u_1u_2\mathbb{F}_{16}$. This simple method works for all rings $R_{k,s}$.
\begin{example}
	Let us consider the skew polynomial ring $R_{1,3}[x;\theta]$. Let $\beta$ be a primitive element of	$\mathbb{F}_{4^2}$, then
\begin{align*}
x^6-1=h(x)g(x)=&[1+(\beta(u_2+u_3)+\beta^{7})x+(\beta(u_2+u_3)+\beta^{7})x^2+x^3]\\
&[1+(\beta(u_2+u_3)+\beta^{7})x+(\beta^{4}(u_2+u_3)+\beta^{13})x^2+x^3] \,\, \in R_{1,3}[x;\theta].
\end{align*}
Since $g(x)=1+(\beta(u_2+u_3)+\beta^{7})x+(\beta^{4}(u_2+u_3)+\beta^{13})x^2+x^3$  is a $\theta$-palindromic polynomial with odd degree then $C=( g(x))$ is a reversible DNA-code over $R_{1,3}$.
	\end{example}
	\begin{example}
		Let us consider the skew polynomial ring $R_{1,3}[x;\theta]$.	Let $\beta$ be a primitive element of	$\mathbb{F}_{4^2}$, then
		\begin{align*}
		x^6-1=h(x)g(x)=&[1+(\beta^{14}+u_1+u_2)x+x^2]\\
		&[1+(\beta^{14}+u_1+u_2)x+(\beta^{14}+u_1+u_2)x^3+x^4] \,\, \in R_{1,3}[x;\theta].
		\end{align*}
		Since $g(x)=1+(\beta^{14}+u_1+u_2)x+(\beta^{14}+u_1+u_2)x^3+x^4$  is a palindromic polynomial with even degree then $C=( g(x))$ is a reversible DNA-code over $R_{1,3}$.
			\end{example}
			
\section{Conclusion}	

In this study, we solve the reversibility problem for DNA codes over non-chain ring $R_{k,s}$. 
We use skew cyclic codes which provide a direct solution for finding reversible DNA codes. 
This method that is based on non-commutativity property of the ring has proven to be more compact and more feasible than the commutative counterparts (e.g. \cite{agul,ise,Gse}).
Generalization of $R_{k,s}=\mathbb{F}_{4^{2k}}[u_1,\ldots,u_{s}]/\langle u_1^2-u_1,\ldots, u_s^2-u_s\rangle$	by changing powers of $u_i$ $(i=1,2,...s)$ and obtaining optimal codes are open problems.
			There are just a few papers on k-mer based  DNA codes including this one. Investigating the reversibility problem over different rings  is an interesting problem. Further, relating these theoretical findings to real DNA data is even more interesting and challenging problem.

\end{document}